\newtheorem{theorem}{Theorem}[section]
\newtheorem{lemma}[theorem]{Lemma}
\theoremstyle{definition}
\newcommand{\euO}{\mathfrak O}
\newcommand{\euP}{\mathfrak P}
\newcommand{\euA}{\mathfrak A}
\newcommand{\eub}{\mathfrak b}
\begin{document}
\title[Galois scaffolds]{Galois scaffolds and Galois module
  structure in extensions of characteristic $p$ local
  fields of degree $p^2$}

\author{Nigel P.~Byott and G.~Griffith Elder}
\email{N.P.Byott@exeter.ac.uk}
\email{elder@unomaha.edu}

\address{Mathematics Research Institute, College of Engineering,
  Mathematics and Physical Sciences, University of Exeter, Exeter 
EX4 4QF U.K.}  

\address{Mathematics Dept.,
University of Nebraska at Omaha,
Omaha, NE 68182-0243 U.S.A.}

\date{\today}
\bibliographystyle{amsalpha}

\begin{abstract}
A Galois scaffold, in a Galois extension of local fields with
perfect residue fields, is an adaptation of the normal basis to the
valuation of the extension field, and thus can be applied to answer
questions of Galois module structure.  Here we give a sufficient
condition for a Galois scaffold to exist in fully ramified Galois
extensions of degree $p^2$ of characteristic $p$ local fields.  
This condition becomes necessary when we restrict to $p=3$.  For extensions
$L/K$ of degree $p^2$ that satisfy this condition, we determine the
Galois module structure of the ring of integers by finding necessary and
sufficient conditions for the ring of integers of $L$ to be free over
its associated order in $K[\mbox{Gal}(L/K)]$.
\end{abstract}


\maketitle

\section{Introduction}

The Galois module structure of the ring of integers in ramified
$C_p$-extensions of local fields $L/K$ of characteristic $p$ was
studied in \cite{aiba,deSmit}. Of basic importance to that work was a
$K$-basis for the group algebra $K[\mbox{Gal}(L/K)]$ whose effect on
the valuation of the elements of $L$ was easy to determine.  In
\cite{elder:scaffold}, an attempt was made to capture the nice
properties of this basis with the definition of a Galois scaffold.

In this paper, we revise this definition slightly, and show that, in
general, a totally ramified Galois $p$-extension need not admit a
Galois scaffold.  Indeed, the conditions, given in
\cite{elder:scaffold}, that are sufficient for a Galois scaffold to
exist in a fully ramified elementary abelian $p$-extension of
characteristic $p$ local fields are shown here to be necessary for
$C_3\times C_3$-extensions. This is technical work ({\em i.e.}
painstaking linear algebra).  So we take the opportunity here to
extend the results of \cite{elder:scaffold} to
$C_{p^2}$-extensions. Thus in Theorem \ref{scaffold} we give
conditions that are sufficient for a Galois scaffold to exist in any
fully ramified, degree $p^2$ extension of characteristic $p$ local
fields with perfect residue fields, and then prove:

\begin{theorem} \label{assoc-main}
Let $L/K$ be a fully ramified Galois extension of degree $p^2$ that
because it satisfies the conditions of Theorem \ref{scaffold}
possesses a Galois scaffold.  Let $\euA_{L/K}=\{\alpha\in
K[G]:\alpha\euO_L\subseteq \euO_L\}$ be the associated order of the
ring of integers $\euO_L$ of $L$.  Then
$$\euO_L\mbox{ is free over }\euA_{L/K}\mbox{ if and only if }r(b)
\mid p^2-1,$$ where $r(b)$ denotes the least nonnegative residue
modulo $p^2$ of
the second (lower) ramification number of $L/K$.  Furthermore, if
$\euO_L$ is free over $\euA_{L/K}$ then any element $\rho\in L$
with normalized valuation $v_L(\rho)=r(b)$ satisfies
$\euO_L=\euA_{L/K}\rho$.
\end{theorem}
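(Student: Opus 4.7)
The plan is to use the Galois scaffold from Theorem \ref{scaffold} to reduce the freeness question to a purely numerical condition on residues modulo $p^2$, generalizing the $C_p$ strategy of Aiba and de Smit.

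First, I would use the scaffold to produce an explicit $\euO_K$-basis for $\euA_{L/K}$. The scaffold supplies a $K$-basis $\{\Psi_{i,j}:0\le i,j\le p-1\}$ of $K[G]$ whose action on $L$ shifts valuations predictably: if $v_L(x)=n$, then $\Psi_{i,j}x$ has valuation $n+w(i,j)$ up to higher-valuation correction terms. Determining the minimal integers $c(i,j)$ such that $\pi_K^{c(i,j)}\Psi_{i,j}$ carries $\euO_L$ into itself, and checking linear independence over $\euO_K$, one obtains $\{\pi_K^{c(i,j)}\Psi_{i,j}\}$ as an $\euO_K$-basis for $\euA_{L/K}$.

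Second, I would reduce freeness to a combinatorial covering statement. Since $\euO_L$ has $\euO_K$-rank $p^2$, any family of $p^2$ elements of $\euO_L$ whose valuations realize each residue class modulo $p^2$ exactly once is automatically an $\euO_K$-basis. Hence $\euA_{L/K}\rho=\euO_L$ if and only if the multiset $\{v_L(\pi_K^{c(i,j)}\Psi_{i,j}\rho)\bmod p^2\}$ exhausts $\bZ/p^2\bZ$ as $(i,j)$ varies. For $\rho$ with $v_L(\rho)=r(b)$, these residues take the form $r(b)+w(i,j)+p^2c(i,j)\bmod p^2$, depending only on $(i,j)$ and on the ramification numbers $b_1<b_2=b$. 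In particular the condition is insensitive to the leading coefficient of $\rho$, which will yield the ``furthermore'' clause for free.

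Third, I would analyze the covering condition combinatorially. After unravelling the formulas for $w(i,j)$ and $c(i,j)$ in terms of $b_1$, $b_2$, and $p^2$, the set of residues should reduce to a pattern governed by $r(b)$ alone, with $r(b)\mid p^2-1$ emerging as the exact criterion for it to cover $\bZ/p^2\bZ$. Sufficiency should follow by exhibiting a bijection between index pairs $(i,j)$ and residue classes, while necessity requires producing a valuation collision between two distinct index pairs whenever the divisibility fails, forcing some residue class to be missed. The main obstacle lies precisely here: the exponents $c(i,j)$ involve floor functions entangled with both ramification breaks, so showing that the resulting residues collapse cleanly to a pattern controlled by $r(b)$ alone (in both directions of the equivalence) demands careful arithmetic. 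The paper's own observation that the sufficient scaffold conditions become necessary in the $C_3\times C_3$ case hints that this numerical analysis is delicate, and may require case distinctions based on the relative sizes of $b_1$, $b_2$, and powers of $p$.
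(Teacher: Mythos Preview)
Your outline has a genuine gap in Step~2. The claim ``any family of $p^2$ elements of $\euO_L$ whose valuations realize each residue class modulo $p^2$ exactly once is automatically an $\euO_K$-basis'' is false: for example $\{\pi_K,\pi_L,\pi_L^2,\ldots,\pi_L^{p^2-1}\}$ hits every residue class but misses the units, hence does not span $\euO_L$. What is true is that such a family is a basis if and only if the valuations are \emph{exactly} $\{0,1,\ldots,p^2-1\}$. Worse, your covering criterion is vacuous: with $c(i,j)$ defined so that $\pi_K^{c(i,j)}\Psi_{i,j}\in\euA_{L/K}$, the residues $v_L(\pi_K^{c(i,j)}\Psi_{i,j}\rho)\bmod p^2$ equal $v_L(\Psi_{i,j}\rho)\bmod p^2$, and these already form a complete set of residues by the scaffold property~(\ref{scaff}), for \emph{every} extension under consideration. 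So your proposed ``iff'' holds trivially and cannot detect $r(b)\mid p^2-1$. The correct freeness criterion is not a residue-covering statement but an exact-value statement: writing $d_a=\lfloor\eub(a)/p^2\rfloor$ and $w_j=\min_a(d_{j+a}-d_a)$, one needs $w_j=d_j-d_0$ for all $j$, equivalently $d_{x+y}+d_0\ge d_x+d_y$ for all admissible $x,y$. This is a super-additivity condition on floor functions, not on residues.

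Two further points. First, your plan only argues that $\rho$ with $v_L(\rho)=r(b)$ generates iff some numerical condition holds; it does not explain why, if $\euO_L$ is free, a generator can be taken of this particular valuation. The paper handles this converse by a change-of-basis matrix argument (upper triangular with unit determinant), which you would need to supply. Second, in Step~3 the decisive structural move is absent: when one analyzes $d_{x+y}+d_0\ge d_x+d_y$, the case where the $p$-adic addition $x+y$ involves a carry is disposed of using the scaffold hypothesis $b_2>p^2b_1$ (condition~(\ref{cyc-spread})), and only the carry-free case reduces to the criterion $r(b)\mid p^2-1$ already established in \cite{byott:QJM,byott:scaffold}. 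Without isolating the carry and invoking~(\ref{cyc-spread}), the floor inequalities do not collapse to a condition on $r(b)$ alone.
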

The proof of this result appears in \S2.4.

\subsection{Notation}
Let $p$ be prime and let $\mathbb{F}_p$ be the finite field with $p$ elements.
Let
$\kappa$ be a perfect field containing $\mathbb{F}_p$, let
$K_0=\kappa((t))$ be the local function field with residue field
$\kappa$, and let $K_n/K_0$ be a fully ramified Galois extension of
degree $p^n$ with Galois group $G=\mbox{Gal}(K_n/K_0)$.  The
ramification filtration of $G$ is the set of subgroups
$G_i=\{\sigma\in G: v_n((\sigma-1)\pi_n)\geq i+1\}$.  Subscripts denote
field of reference. So, for example, $v_n$ is the additive valuation on $K_n$,
normalized so that $v_n(K_n^\times)=\mathbb{Z}$, $\pi_n$ is a prime
element of $K_n$ with $v_n(\pi_n)=1$, and $\euO_n=\{x\in K_n:
v_n(x)\geq 0\}$ is the valuation ring with maximal ideal $\euP_n=
\{x\in K_n: v_n(x)> 0\}$.

Quotients of consecutive ramification groups $G_i/G_{i+1}$ are either
trivial or elementary abelian $C_p\times \cdots \times C_p$
\cite[IV\S2 Prop 7 Cor 3]{serre:local}. Thus the usual ramification
filtration can be refined: There is a filtration $G=H_0\supsetneq
H_1\supsetneq \cdots \supsetneq H_{n-1}\supsetneq H_n=\{1\}$ such that
$H_i/H_{i+1}\cong C_p$ for $0\leq i\leq n-1$ and $\{H_i:0\leq i\leq
n\}\supseteq \{G_i:i\geq 1\}$. Choose one such filtration. Choose elements
$\sigma_{i+1}\in H_i\setminus H_{i+1}$ for each $0\leq i\leq n-1$ and
define $b_i=v_n((\sigma_{i}-1)\pi_n)-1$. Then $b_1\leq b_2\leq \cdots
\leq b_n$. Define the ramification multiset to be $\{b_i:1\leq i\leq
n\}$, which is independent of our choices \cite[IV\S1 Prop 3
  Cor]{serre:local}, and thus should be considered a fundamental
invariant of the extension.  As a set, it is just the set of (lower)
ramification numbers, subscripts $i$ with $G_i\supsetneq G_{i+1}$.
 
Define $K_i=K_n^{H_i}$ to be the fixed field of $H_i$. Thus we have
a path through the subfields of $K_n$, from $K_n$ down
to $K_0$, which is consistent with the ramification multiset:
$\{b_i:j< i\leq n\}$ is the ramification multiset for
$K_n/K_j$, $\{b_i:0< i\leq j\}$ is the ramification multiset
for $K_j/K_0$, and $b_i$ is the ramification number for
$K_i/K_{i-1}$.

Let $\euA_{K_n/K_0}=\{\alpha\in K_0[G]:\alpha\euO_n\subseteq \euO_n\}$
denote the associated order of $\euO_n$ in the group algebra $K_0[G]$.
Since $\euA_{K_n/K_0}$ is an $\euO_0$-order in $K_0[G]$ containing
$\euO_0[G]$ and $\euO_n$ is a module over $\euA_{K_n/K_0}$, it is
natural to ask about the structure of $\euO_n$ over $\euA_{K_n/K_0}$.
Although more general questions can be addressed ({\em
  e.g.}~\cite{deSmit}), we follow \cite{aiba, byott:scaffold} here and
focus our attention on determining conditions that are necessary and
sufficient for $\euO_n$ to free over $\euA_{K_n/K_0}$.

Let $\lfloor x\rfloor$ and $\lceil x\rceil$ denote the greatest
integer and least integer functions, respectively.  Let
$\wp(X)=X^p-X\in \mathbb{Z}[X]$ and $\binom{X}{i}=X(X-1)\cdots
(X-i+1)/i!$ denote the binomial coefficient. Define truncated
exponentiation by the following truncation of the binomial series:
$$(1+X)^{[Y]}:=\sum_{i=0}^{p-1}\binom{Y}{i}X^i\in\mathbb{Z}_{(p)}[X,Y],$$
where $\mathbb{Z}_{(p)}$ is the integers localized at $p$.  
Vandermonde's Convolution Identity is
$\sum_{i=0}^{t}\binom{Y}{i}\binom{X}{t-i}
=\binom{X+Y}{t}\in\mathbb{Z}_{(p)}[X,Y]$
for $0\leq t\leq p-1$.

\subsection{Definition of Galois scaffold}
The term was introduced in \cite{elder:scaffold}. Its definition is
refined here.  Two ingredients are required: A valuation criterion for
a normal basis generator and a generating set for a particularly nice
$K_0$-basis of the group algebra $K_0[G]$.

\subsubsection{Valuation criterion}
In a Galois extension of local fields $K_n/K_0$, a valuation criterion
for a normal basis generator is an integer $c$ such that if $\rho\in
L$ with $v_n(\rho)=c$ then $\{\sigma\rho:\sigma\in G\}$ is a normal
basis for $K_n$ over $K_0$.  For fields of characteristic $p$, every
totally ramified Galois $p$-extension $K_n/K_0$ has a valuation
criterion. Indeed, if the extension is abelian, $c$ can then be any
integer $c\equiv b_n\bmod p^n$ \cite{elder:cor-criterion}.

\subsubsection{Generating set for the group algebra $K_0[G]$}
We have chosen a refined filtration $\{H_i\}$ of the Galois group
along with group elements $\sigma_i\in H_{i-1}\setminus H_i$. These
elements certainly generate the Galois group,
$G=\{\prod_{i=1}^n\sigma_i^{a_i}:0\leq a_i\leq p-1\}$, and thus generate
a basis for $K_0[G]$ over $K_0$, a basis that is naturally
associated with a normal basis for $K_n/K_0$.  A Galois
scaffold occurs if there is a similar generating set of $n$ elements
$\{\Psi_i\}$ from the augmentation ideal $(\sigma-1:\sigma\in G)$ of
$K_0[G]$ that satisfies
a regularity condition and a spanning condition:
For all $0\leq j<p$ and all
$\rho,\rho'\in K_n$ that satisfy the valuation criterion, 
$v_n(\rho),v_n(\rho')\equiv c\bmod p^n$, 
\begin{equation}\label{strong-scaff}
v_n(\Psi_i^j\rho)-v_n(\rho)=j\cdot( (v_n(\Psi_i\rho')-v_n(\rho')).
\end{equation}
For $0\leq a<p^n$, define $\Psi^{(a)} = \Psi_{n}^{a_{(0)}}\Psi_{n-1}^{a_{(1)}}\cdots \Psi_{1}^{a_{(n-1)}}$
where $a$ is expanded $p$-adically as $a=\sum_i
a_{(i)} p^i$ with $0\leq a_{(i)}<p$.  Then
for $v_n(\rho)\equiv c\bmod p^n$,
\begin{equation}\label{scaff}
\left \{v_n(\Psi^{(a)}\rho):0\leq a<p^n\right \}
\end{equation} is a complete set
of residues modulo $p^n$.  Because $K_n/K_0$ is fully
ramified of degree $p^n$, this means that $\{\Psi^{(a)}:0\leq a<p^n\}$
is a $K_0$-basis for $K_0[G]$.

A quick comment now about the definition of Galois
scaffold in \cite{elder:scaffold}. While we explicitly require a
Galois scaffold here to have two properties, (\ref{strong-scaff}) and
(\ref{scaff}), the definition stated in \cite{elder:scaffold} required only
(\ref{scaff}) explicitly.  Note however that the Galois scaffold given in
\cite{elder:scaffold} did satisfy both (\ref{strong-scaff}) and
(\ref{scaff}).

\section{Galois extensions of degree $p^2$ with Galois scaffold and their resulting Galois module structure}
\subsection{Characterizing the extensions}

Elementary abelian extensions of degree $p^2$ correspond to
2-dimensional subspaces of $K_0 / \wp(K_0)$, where $\wp(K_0)=\{ \wp(k)
: k \in K_0\}$. Cyclic extensions of degree $p^2$ correspond to Witt
vectors $(\beta_1,\beta_2)$ of length 2, and the extension is
unchanged if we add an element of $\wp(K_0)$ to $\beta_1$ or
$\beta_2$. Thus, in either case, the extensions are determined by a
pair of coset representations of $\wp(K_0)$. In this subsection, we
explain these correspondences and tie those coset representatives
(reduced representatives) that are distinguished for having maximal
valuation to   the ramification numbers for $K_2/K_0$. We also set
up notation for the Galois action that is consistent with \S1.1.
\subsubsection{Elementary abelian}
The map that takes $K_2=K_0(x_1,x_2)$ with $\wp(x_i)=\beta_i\in K_0$
to $V=\mathbb{F}_p\beta_1+\mathbb{F}_p\beta_2+\wp(K_0)$ sets up
bijection between $C_p\times C_p$-extensions of $K_0$ and
$2$-dimensional $\mathbb{F}_p$-vector spaces of $K_0/\wp(K_0)$.  Given
such a subspace $V$, choose $\beta_1$ so that
$v_0(\beta_1)=\max\{v_0(\beta):\beta\in V\}$.  Choose $\beta_2 \in V$
so that $\beta_1$ and $\beta_2$ span $V$ and replace $\beta_2$ by
another representative of $\beta_2+\mathbb{F}_p\beta_1+\wp(K_0)$ if
necessary so that $v_0(\beta_2)=\max\{v_0(\beta):\beta\in
\beta_2+\mathbb{F}_p\beta_1+\wp(K_0)\}$.  As a result,
$v_0(\beta_i)=-u_i$ with $0\leq u_1\leq u_2$ and $p\nmid u_i$ unless
$u_1=0$, in which case $K_2/K_0$ is not fully ramified.

Restrict to the situation where $K_2/K_0$ is fully ramified. Then 
because of our choices for $\beta_1$ and $\beta_2$,
$\{u_1, u_2\}$ is the set of upper ramification numbers for $K_2/K_0$.
The lower ramification numbers are $b_1=u_1$ and $b_2=u_1+p(u_2-u_1)$
\cite[IV \S3]{serre:local}.  Choose $\sigma_i\in G$ so that
$(\sigma_i-1)x_j=\delta_{i,j}$ where
$$\delta_{i,j}=\begin{cases}1&\mbox{for }i=j,\\
0&\mbox{for }i\neq j.\end{cases}$$
Set $H_1=\langle\sigma_2\rangle$, so that $K_0(x_1)=K_1=K_2^{\sigma_2}$.
Since the norm
$N_{K_1/K_0}(x_1)=\wp(x_1)=\beta_1$, we have $v_1(x_1)=-b_1$ as well.
Similarly, $v_2(x_2)=-pu_2$.

\subsubsection{Cyclic}
As shown in \cite{schmid:1936,schmid:1937}, each $C_{p^2}$-extension
of $K_0$ can be associated with a Witt vector $(\beta_1, \beta_2)$. We
can assume that $\beta_1\in K_0$ is the element of maximum valuation
in its nonzero coset of $\wp(K_0)$ and that $\beta_2\in K_0$ is a
element of maximum valuation in its coset of
$\mathbb{F}_p\beta_1+\wp(K_0)$.  If we abuse notation by identifying
these cosets with their representatives, this gives a bijection
between $C_{p^2}$-extensions and the one-dimensional
$\mathbb{F}_p$-vector spaces
$\{(a\beta_1,a\beta_2):a\in\mathbb{F}_p\}$.

Restrict now to the situation where $K_2/K_0$ is fully ramified. Let
$\sigma_1$ generate the Galois group $G$, and set
$\sigma_2=\sigma_1^p$ and $H_1=\langle \sigma_2\rangle$.  Then
$K_1=K_0(x_1)$, where $\wp(x_1)=\beta_1$, is the fixed field of
$\sigma_2$.  Without loss of generality, $(\sigma_1-1)x_1=1$. Our
choice of $\beta_1$ means that $v_0(\beta_1)=-b_1<0$ with $p\nmid
b_1$.  Since $\beta_1$ is the norm of $x_1$, $v_1(x_1)=-b_1$. Thus
$b_1$ is the ramification number for $K_1/K_0$, and also the first
(lower) ramification number for $K_2/K_0$.

The second (lower) ramification number $b_2$ of $K_2/K_0$ is also the
ramification number for $K_2/K_1$. It is dependent upon both
$v_0(\beta_1)=-b_1$ and $v_0(\beta_2)=-u_2^*$, which due to our
assumption on $\beta_2$ satisfies $0\leq u_2^*$ and if $u_2^*\neq 0$
then $p\nmid u_2^*$.  Indeed, we will proceed now to show that
$b_2=\max\{(p^2-p+1)b_1,pu_2^*-(p-1)b_1\}$, and thus that the upper
ramification numbers are $u_1=b_1<u_2=\max\{pb_1,u_2^*\}$.

Let
$D_1=(x_1^p+\beta_1^p-(x_1+\beta_1)^p)/p=-\sum_{i=1}^{p-1}\frac{1}{p}\binom{p}{i}x_1^i\beta_1^{p-i}\in
K_1$. Observe that $v_1(D_1)=-(p^2-p+1)b_1$.  As explained in
\cite{schmid:1936,schmid:1937}, $K_0(x^*_2)$ with $\wp(x^*_2)=D_1$ is
a $C_{p^2}$-extension of $K_0$ that contains $K_1$ (and is associated
with the Witt vector $(\beta_1,0)$). 
Moreover, every
$C_{p^2}$-extension of $K_0$ that contains $K_1$ arises as
$K_2=K_0(x_2)$ with $\wp(x_2)=D_1+\beta_2$.
Then $x_2=x_2^*+z_2$ 
where $\wp(z_2)=\beta_2$,
and
$K_2=K_0(x_2)$ is contained in the $C_{p^2}\times C_p$-extension
$K_0(x_2^*,z_2)$.  Without loss of generality, we may assume that
$\sigma_1\in\mbox{Gal}(K_0(x_2^*,z_2)/K_0)$ satisfies $(\sigma_1-1)
z_2=0$. Furthermore
$(\sigma_1-1)x_2=(\sigma_1-1)x_2^*=C_1$ where $C_1=(x_1^p+1-(x_1+1)^p)/p=
-\sum_{i=1}^{p-1}\frac{1}{p}\binom{p}{i}x_1^i$, and $(\sigma_2-1)
x_2=1$.  Notice that $v_1(C_1)=-(p-1)b_1$.

We now work with the ramification filtrations of two different
$C_p\times C_p$-extensions: $K_0(x_1,z_2)/K_0$ and $K_1(x_2^*,
z_2)/K_1$.  There are three possibilities for the set of upper
ramification numbers for $K_0(x_1,z_2)/K_0$: If $b_1\neq u_2^*$, the
set is $\{b_1,u_2^*\}$.  If $b_1=u_2^*$, the set is either $\{b_1\}$
or $\{b_1,v\}$ (for some $v<b_1$). In each case, we pass to the lower
ramification numbers for $K_0(x_1,z_2)/K_0$, using \cite[IV
  \S3]{serre:local}. The ramification number for $K_1(z_2)/K_1$ is
therefore $b_1+p(u_2^*-b_1)$ (when $u_2^*>b_1$) or some integer $\leq
b_1$ (when $u_2^*\leq b_1$).  Now consider $K_1(x_2^*, z_2)/K_1$.  It
is easy to see that the ramification number for $K_1(x_2^*)/K_1$ is
$-v_1(D_1)=(p^2-p+1)b_1$. This means, since if $u_2^*\neq 0$ then
$p\nmid u_2^*$, that the ramification numbers for $K_1(x_2^*)/K_1$ and
for $K_1(z_2)/K_1$ are distinct. As a result, these are the two
distinct upper ramification numbers for $K_1(x_2^*, z_2)/K_1$.
Passing to the lower ramification numbers for $K_1(x_2^*, z_2)/K_1$,
considering all the cases, we find that the ramification number of
$K_1(x_2)/K_1$ is $b_2=\max\{(p^2-p+1)b_1,pu_2^*-(p-1)b_1\}$.

\subsection{The Galois scaffold}
Since $p\nmid v_0(\beta_1)$,
the set
$\{v_0(\beta_1^t):0\leq t\leq p-1\}$ is a complete set of residues
modulo $p$. As a result, it is generically the case that $\beta_2=\sum_{t=0}^{p-1}\mu_t^p\beta_1^t$ for
some $\mu_t\in K_0$. Moreover, since we are only interested in the
expression for $\beta_2$ in $K_0/K_0^\wp$, we may assume that the
$t=0$ term satisfies $\mu_0^p\in \kappa$.  Gather all terms except
$\mu_1^p\beta_1$ into an ``error term'' $\epsilon$. Replace $\mu_1$
with $\mu$, and let $m=-v_0(\mu)$.
Thus
$$\beta_2=\mu^p\beta_1+\epsilon$$ where we may assume either
$\epsilon\in \kappa$ or $p\nmid v_0(\epsilon)=-e<0$.  Note that
$v_0(\epsilon)\not\equiv v_0(\mu^p\beta_1)\bmod p$. Thus $e\not\equiv
b_1\bmod p$.  We are now prepared to state:

\begin{theorem}\label{scaffold}
Let $K_2/K_0$ be a fully ramified Galois extension of degree $p^2$.
Adopt the notation of this section, and assume that
$v_0(\epsilon)>v_0(\beta_2)+(p-1)b_1/p$. For $G\cong C_{p^2}$,
additionally assume $v_0(\beta_1^p)>v_0(\beta_2)+(p-1)b_1/p$. Then
there is a Galois scaffold.  Define $\Psi_1\in K_0[G]$ by
$$\Psi_1+1=
\sigma_1\sigma_2^{[\mu]}=\sigma_1\sum_{i=0}^{p-1}\binom{\mu}{i}(\sigma_2-1)^i.$$
Let $\Psi_2=\sigma_2-1$.
Then for $\alpha\in K_2$ with $v_2(\alpha)\equiv b_2\bmod p^2$ and
$0\leq i,j\leq p-1$,
$$v_2\left(\Psi_2^i\Psi_1^j\alpha\right)=v_2(\alpha)+ib_2+jpb_1.$$
\end{theorem}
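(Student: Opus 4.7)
The proof has two main parts: establishing the valuation-shift behaviour of $\Psi_2$ and of $\Psi_1$, then combining them by iteration. For $\Psi_2 = \sigma_2 - 1$, I would invoke the standard $C_p$-scaffold theory (\cite{aiba, deSmit}) applied to $K_2/K_1$, whose ramification number is $b_2$. This gives $v_2((\sigma_2-1)\beta) = v_2(\beta) + b_2$ for any $\beta$ with $p \nmid v_2(\beta)$. From the formulas in \S2.1, $b_2 \equiv b_1 \pmod p$ and $p \nmid b_1$, so the hypothesis $v_2(\alpha) \equiv b_2 \pmod{p^2}$ implies $v_2(\alpha) \not\equiv 0 \pmod p$; moreover the residue class $\pmod p$ is preserved both by the $\Psi_2$-shift (by $b_2$) and by the $\Psi_1$-shift (by $pb_1 \equiv 0$), permitting iteration throughout the ranges $0 \le i, j \le p-1$.

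The core claim is that $v_2(\Psi_1 \alpha) = v_2(\alpha) + pb_1$ whenever $v_2(\alpha) \equiv b_2 \pmod p$. I would expand
\[
\Psi_1 \alpha = (\sigma_1 - 1)\alpha + \sigma_1\sum_{i=1}^{p-1}\binom{\mu}{i}(\sigma_2-1)^i\alpha
\]
and analyse it term by term. A direct manipulation of the hypothesis $v_0(\epsilon) > v_0(\beta_2) + (p-1)b_1/p$ forces $v_0(\mu^p\beta_1) = v_0(\beta_2)$, equivalently $pm + b_1 = u_2$, whence $b_2 - p^2 m = b_1$ in both the elementary abelian case (from $b_2 = pu_2 - (p-1)b_1$) and the cyclic case (where the extra hypothesis $v_0(\beta_1^p) > v_0(\beta_2) + (p-1)b_1/p$ ensures that $b_2 = pu_2^* - (p-1)b_1$ is the larger of the two candidates). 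Consequently, each $i$-th summand in the expansion contributes valuation exactly $v_2(\alpha) + ib_1$. The $i=0$ and $i=1$ terms, $(\sigma_1-1)\alpha$ and $\mu\,\sigma_1(\sigma_2-1)\alpha$, thus have matching valuation $v_2(\alpha) + b_1$, and the specific definition of $\mu$ via $\beta_2 = \mu^p\beta_1 + \epsilon$ is precisely what makes their leading terms cancel, pushing the sum up to valuation $v_2(\alpha) + pb_1$.

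The main obstacle will be executing this cancellation unambiguously, in particular identifying the leading term of $(\sigma_1-1)\alpha$ in a form that manifestly cancels with that of $\mu\,\sigma_1(\sigma_2-1)\alpha$. In the elementary abelian case the fact that $\sigma_1$ fixes $x_2$ makes the leading structure transparent; in the cyclic case the non-trivial action $\sigma_1 x_2 = x_2 + C_1$ with $v_1(C_1) = -(p-1)b_1$ complicates matters, and the auxiliary hypothesis $v_0(\beta_1^p) > v_0(\beta_2) + (p-1)b_1/p$ is precisely what ensures $C_1$-related contributions have valuation strictly greater than $v_2(\alpha) + pb_1$. Once the $\Psi_1$-claim is established, iterating gives $v_2(\Psi_1^j\alpha) = v_2(\alpha) + jpb_1$, after which the first step yields $v_2(\Psi_2^i\Psi_1^j\alpha) = v_2(\alpha) + ib_2 + jpb_1$, completing the theorem.
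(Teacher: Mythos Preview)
Your outline identifies the right target---that $\Psi_1$ should shift valuation by $pb_1$---but the mechanism you propose for reaching it has a genuine gap. You expand $\Psi_1\alpha$ term by term and assert that the $i=0$ and $i=1$ summands, namely $(\sigma_1-1)\alpha$ and $\mu\,\sigma_1(\sigma_2-1)\alpha$, have ``leading terms that cancel because of the definition of $\mu$.'' But for a \emph{general} $\alpha$ with $v_2(\alpha)\equiv b_2\pmod{p^2}$ there is no evident notion of ``leading term'' to compare: you would first need a basis adapted to the valuation, and showing that $(\sigma_1-1)$ and $\mu(\sigma_2-1)$ agree on the leading coefficient in such a basis is essentially the theorem itself. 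You flag this as ``the main obstacle'' but offer no method for overcoming it; as written, the cancellation is asserted, not proved.

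The paper's argument avoids this circularity by a different route. It first constructs an explicit element $X_2=x_2-\mu x_1+\epsilon_1$ engineered so that $(\sigma_1-1)X_2=-\mu+\mathcal{E}$ with $v_2(\mathcal{E})>pb_1-b_2$. The point is then not a term-by-term cancellation at all: Vandermonde's Convolution Identity gives the exact formula $\sigma_2^{[\mu]}\binom{X_2}{p-1}=\binom{X_2+\mu}{p-1}$, and applying $\sigma_1$ turns this into $\binom{X_2+\mathcal{E}}{p-1}$, which differs from $\binom{X_2}{p-1}$ by terms of controlled valuation. This establishes the scaffold on the single element $\rho=\binom{X_2}{p-1}\binom{x_1}{p-1}$. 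Only \emph{after} this is done does one pass to arbitrary $\alpha$, by expanding $\alpha$ in the $K_0$-basis $\{\Psi_2^m\Psi_1^n\rho\}$ and checking that the cross terms do not interfere; in the cyclic case this last step genuinely uses $b_2>p^2b_1$, via $\Psi_1^p=\Psi_2$, a subtlety your iterative scheme does not address. Indeed, your intermediate claim that $\Psi_1$ shifts by exactly $pb_1$ whenever $v_2(\alpha)\equiv b_2\pmod p$ is false as stated: for $\alpha'=\Psi_1^{p-1}\rho$ one has $\Psi_1\alpha'=\Psi_2\rho$, and the shift is $b_2-(p-1)pb_1>pb_1$.
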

The proof of this theorem appears in \S2.3. First, we examine its
conditions in terms of the ramification numbers for $K_2/K_0$.  In
\S2.1.2, we observed that for $G\cong C_{p^2}$, 
$b_2=\max\{(p^2-p+1)b_1,pu_2^*-(p-1)b_1\}$.  The requirement that
$v_0(\beta_1^p)>v_0(\beta_2)+(p-1)b_1/p$ means that
$pu_2^*-(p-1)b_1>p^2b_1$. Thus for $G\cong C_{p^2}$, $u_2=u_2^*$,
$b_2=pu_2-(p-1)b_1$ and so the requirement that
$v_0(\beta_1^p)>v_0(\beta_2)+(p-1)b_1/p$ is a strengthening of the
lower bound on $b_2$, from $b_2\geq (p^2-p+1)b_1$ to
\begin{equation}
b_2>p^2b_1.
\label{cyc-spread}
\end{equation}
The other condition $v_0(\epsilon)>v_0(\beta_2)+(p-1)b_1/p$, which is
a restriction for both $G\cong C_p\times C_p$ and $C_{p^2}$, implies
$\beta_2\equiv \mu^p\beta_1\bmod \mu^p\beta_1\euP_2$. Using 
$b_2=b_1+
p(u_2-b_1)$ (and thus \eqref{cyc-spread}
when $G\cong C_{p^2}$), this means that
$v_0(\epsilon)>v_0(\beta_2)+(p-1)b_1/p$ 
can be rewritten as
\begin{equation}
b_2>pe.
\label{cyc-shape}
\end{equation}

\subsection{Proof of Theorem \ref{scaffold}}
The result for $G\cong C_p\times C_p$ follows from \cite[Thm
  4.1]{elder:scaffold}. So we focus here on the result for $G\cong
C_{p^2}$ and recall the notation of \S2.1.2. There are three steps in
our argument.  Thus three subsections.
\subsubsection{An explicit element satisfying the valuation criterion}
The hypothesis on $v_0(\epsilon)$ ensures at least that
$v_0(\epsilon) > v_0(\beta_2)=v_0(\mu^p \beta_1)$, so that $-b_1-pm <
-e$ and $u_2^*=pm + b_1$. Under this weaker assumption, we determine
$\epsilon_1\in K_1$ such that $X_2=x_2-\mu x_1+\epsilon_1\in K_2$ has
valuation $v_2(X_2)=-b_2=-\max\{pu_2^*-(p-1)b_1,(p^2-p+1)b_1\}$.  Once
this is done, $\rho=\binom{X_2}{p-1}\binom{x_1}{p-1}\in K_2$ satisfies
$v_2(\rho)\equiv b_2\bmod p^2$.

The element $\epsilon_1\in K_1$ is determined by $\epsilon$.
Recall that either $\epsilon\in \kappa$ or $p\nmid -e<0$.  If
$\epsilon\in \kappa$, we simply let $\epsilon_1=0$ (and also set
$E_1=\epsilon$).  The interesting case occurs when $\epsilon\not\in \kappa$
and thus $K_1(z)/K_0$ with $\wp(z)=\epsilon$ is a fully ramified
$C_p\times C_p$ extension with upper ramification numbers
$e=-v_0(\epsilon)$ and $b_1$. Recall $e\not\equiv b_1\bmod p$. So
$e\neq b_1$.  Passing to the lower numbering for $K_1(z)/K_0$ using
\cite[IV \S3]{serre:local}, we find that the ramification number for
$K_1(z)/K_1$ is $\max\{e,b_1+p(e-b_1)\}$ (either $e$ when $e< b_1$, or
$pe-(p-1)b_1$ when $e>b_1$). Using this information regarding
$K_1(z)/K_1$ there must be a coset representative $E_1$ for the coset
$\epsilon+\wp(K_1)$ in $K_1/\wp(K_1)$ such that
$v_1(E_1)=-\max\{e,b_1+p(e-b_1)\}$.  Thus
$E_1=\epsilon+\wp(\epsilon_1)$ for some
$\epsilon_1\in K_1$.  Since $v_1(E_1)>v_1(\epsilon)$, we have
$-pe=v_1(\epsilon)=v_1(\wp(\epsilon_1))$.  This means that
$v_1(\epsilon_1)=-e$.

Observe, based upon \S2.1.2, that
$\wp(x_2)=D_1+\beta_2=D_1+\mu^p\beta_1+\epsilon$ and $\wp(\mu
x_1)=\mu^px_1^p-\mu x_1=\mu^p(x_1+\beta_1)-\mu
x_1=\wp(\mu)x_1+\mu^p\beta_1$. Therefore $\wp(X_2)=D_1-\wp(\mu
)x_1+E_1\in K_1$. 
Because $-b_1-pm<-e$, $v_1(\wp(\mu
)x_1)=-b_1-p^2m<(p-1)b_1-pe\leq v_1(E_1)$. Thus 
$v_1(-\wp(\mu)x_1+E_1)=-b_1-p^2m$. Furthermore
$v_1(D_1)=-(p^2-p+1)b_1$. Thus
$v_1(\wp(X_2))=\min\{-b_1-p^2m,-(p^2-p+1)b_1\}=-b_2$. Since
$\mbox{Norm}_{K_2/K_1}(X_2)=\wp(X_2)$, $v_2(X_2)=-b_2$.

\subsubsection{A Galois scaffold for the explicit element in \S2.3.1}
Observe that
$(\sigma_1-1)X_2=C_1-\mu+(\sigma-1)\epsilon_1$
and thus
$(\sigma_1-1)X_2=-\mu+\mathcal{E}$ where $\mathcal{E} =C_1+(\sigma_1-1)\epsilon_1\in
K_1$ satisfies $v_1(\mathcal{E} )=\min\{-(p-1)b_1,b_1-e\}$. Note that for
$e>0$ we have $p\nmid e$. So $(p-1)b_1\neq e-b_1$. In any case,
(\ref{cyc-spread}) means $-(p-1)b_1>b_1-b_2/p$ 
and (\ref{cyc-shape}) means that $b_1-e>b_1-b_2/p$. Together they yield
$v_1(\mathcal{E} )>b_1-b_2/p$. Thus $v_2(\mathcal{E} )>pb_1-b_2$.

Using truncated
exponentiation and Vandermonde's Convolution Identity, 
$$\sigma_2^{[\mu]}\binom{X_2}{p-1}=\sum_{i=0}^{p-1}\binom{\mu}{i}(\sigma_2-1)^i
\binom{X_2}{p-1}=\sum_{i=0}^{p-1}\binom{\mu}{i} \binom{X_2}{p-i-1}=
\binom{X_2+\mu}{p-1}.$$ Therefore
$\sigma_1\sigma_2^{[\mu]}\binom{X_2}{p-1}=\binom{X_2+\mathcal{E} }{p-1}$.  If
we expand $\binom{X_2+\mathcal{E} }{p-1}$ 
using Vandermonde's Convolution Identity, we
notice that for $0\leq i<p-1$,
$v_2(\binom{X_2}{i}\binom{\mathcal{E}}{p-i-1})>(p-i-1)pb_1-(p-1)b_2$.
So $v_2(\binom{X_2}{i}\binom{\mathcal{E}}{p-i-1})> pb_1-(p-1)b_2=v_2(\binom{X_2}{p-1}/x_1)$
for $0\leq i<p-1$ and thus
$$\sigma_1\sigma_2^{[\mu]}\binom{X_2}{p-1}=\binom{X_2+\mathcal{E} }{p-1}\equiv
\binom{X_2}{p-1}\bmod \binom{X_2}{p-1}\frac{1}{x_1} \euP_2.$$

Let $\Psi_1=\sigma_1\sigma_2^{[\mu]}-1$ and observe that for
$0\leq i\leq p-1$, $\Psi_1^i\binom{X_2}{p-1}\binom{x_1}{p-1}\equiv
\binom{X_2}{p-1}\binom{x_1}{p-i-1}\bmod
\binom{X_2}{p-1}\binom{x_1}{p-i-1}\euP_2$, which means that
with $\Psi_2=\sigma_2-1$,
$$\Psi_2^i\Psi_1^j\rho\equiv
\binom{X_2}{p-i-1}\binom{x_1}{p-j-1}\bmod
\binom{X_2}{p-i-1}\binom{x_1}{p-j-1}\euP_2,$$ and therefore
$v_2(\Psi_2^i\Psi_1^j\rho)=v_2(\rho)+ib_2+jpb_1$ for
$0\leq i,j\leq p-1$.
Note that $\{v_2(\rho)+ib_2+jpb_1:
0\leq i,j\leq p-1\}$
is a complete set of residues modulo $p^2$.

\subsubsection{The Galois scaffold holds for any element 
$\alpha\in K_2$ with $v_2(\alpha)\equiv b_2\bmod p^2$} Express
$\alpha\in K_2$ with $v_2(\alpha)\equiv b_2\bmod p^2$ in the
$K_0$-basis $\{\Psi_2^m\Psi_1^n\rho: 0\leq m,n\leq p-1\}$.  So
$\alpha=\sum_{0\leq m,n<p}A_{m,n}\Psi_2^m\Psi_1^n\rho$ for
some $A_{i,j}\in K_0$.  Since $v_2(\alpha)\equiv v_2(\rho)\bmod p^2$,
$A_{0,0}\neq 0$ and it will be enough to prove the result
for $\alpha'=\alpha/A_{0,0}$. Therefore, without loss of
generality, we assume $A_{0,0}=1$ and $v_2(A_{m,n})+mb_2+npb_1>0$ for
$(m,n)\neq (0,0)$. Now apply $\Psi_2^i\Psi_1^j$ for $0\leq
i,j\leq p-1$ to $\alpha$.  Clearly
$v_2(\Psi_2^i\Psi_1^j\rho)=v_2(\alpha)+ib_2+jpb_1$. The only
question then is whether $v_2(\Psi_2^i\Psi_1^j\cdot A_{m,n}
\Psi_2^m\Psi_1^n\rho)>v_2(\alpha)+ib_2+jpb_1$ for $(m,n)\neq
(0,0)$. Since $\Psi_2^p=0$ and $\Psi_1^p=\Psi_2$, the
interesting cases, when $\Psi_2^i\Psi_1^j\cdot
\Psi_2^m\Psi_1^n\neq 0$, occur only when $j+n<p$ and $i+m<p$,
or $j+n\geq p$ and $i+m+1<p$. Consider them separately. If $j+n<p$ and
$i+m<p$, then $v_2(\Psi_2^i\Psi_1^j\cdot A_{m,n}
\Psi_2^m\Psi_1^n\rho)=v_2(\rho)+v_2(A_{m,n})+(i+m)b_2+(j+n)pb_1>
v_2(\rho)+ib_2+jpb_1$. Of course $v_2(\rho)=v_2(\alpha)$.  If $j+n\geq
p$ and $i+m+1<p$, then $v_2(\Psi_2^i\Psi_1^j\cdot A_{m,n}
\Psi_2^m\Psi_1^n\rho)=v_2(\rho)+v_2(A_{m,n})+(i+m+1)b_2+(j+n-p)pb_1>
v_2(\rho)+ib_2+jpb_1+(b_2-p^2b_1)$. Recall restriction
(\ref{cyc-spread}) that $b_2>p^2b_1$.

\subsection{Proof of Theorem \ref{assoc-main}}
The proof for $G=\mbox{Gal}(K_2/K_0)\cong C_p\times C_p$ is contained
in \cite{byott:scaffold}. Here we adjust that argument so that it
applies to $G\cong C_{p^2}$.  Let $K_2/K_0$ satisfy the conditions in
Theorem \ref{scaffold}. So, in particular, $b_2\equiv b_1\equiv
r(b)\bmod p^2$.  Recall
$\Psi_1^p=\Psi_2$ and
$\Psi_2^p=0$. This means that if we represent every nonnegative
integer $p$-adically ({\em i.e.} for $a\in\mathbb{Z}$ with $a\geq 0$
write $a=\sum_{i=0}^{\infty}a_{(i)}p^i$ for some $0\leq a_{(i)}\leq
p-1$), then we may define
$$\Psi^{(a)}=\begin{cases}\Psi_2^{a_{(1)}}\Psi_1^{a_{(0)}}& a<p^2,\\
0 &\mbox{ otherwise,} \end{cases}$$
and find that
$\Psi^{(a)}\Psi^{(a')}=\Psi^{(a+a')}$.
Furthermore,
if we define a function $\eub$ from the nonnegative integers to $\mathbb{Z}\cup\{\infty\}$:
$$\eub(a)=\begin{cases} (1+a_{(1)})b_2+a_{(0)}pb_1 & a<p^2,\\ \infty
&\mbox{ otherwise,} \end{cases}$$ then because of
Theorem \ref{scaffold}, given any $\rho\in K_2$ with
$v_2(\rho)= b_2$, we have
$v_2(\Psi^{(a)}\rho)=\eub(a)$.
For $0\leq a<p^2$, set
$$d_a=\left\lfloor\frac{\eub(a)}{p^2}\right\rfloor.$$
So $\eub(a)=d_ap^2+r(\eub(a))$ where $r(\eub(a))$
is the least nonnegative residue modulo $p^2$.

Let
$\rho_*\in K_2$ with $v_2(\rho_*)=r(b_2)$. Recall that $t$ is a
uniformizer for $K_0=\mathbb{F}((t))$. Set $\rho=t^{d_0}\rho_*$, so
$v_2(\rho)=b_2$. Moreover, for $0\leq a$ set
$$\rho_a=t^{-d_a}\Psi^{(a)}\cdot \rho,$$ which means that $\rho_a=0$
for $a\geq p^2$. Note that $v_2(\rho_a)=r(\eub(a))$ for $0\leq
a<p^2$. Thus $\{v_2(\rho_a):0\leq a<p^2\}=\{0,\ldots, p^2-1\}$, 
$\{\rho_a\}_{0\leq a<p^2}$ is an $\euO_0$-basis for $\euO_2$, and 
the elements $\Psi^{(a)}\rho$
span $K_2$ over $K_0$. By comparing dimensions, we see that $\rho$
generates a normal basis for the extension $K_2/K_0$, and
$\{\Psi^{(a)}\}_{0\leq a<p^2}$ is a $K_0$-basis for the group algebra
$K_0[G]$.
Observe that
\begin{equation}\label{psi-action}
\Psi^{(a_1)}\cdot \rho_{a_2}=t^{d_{a_1+a_2}-d_{a_2}}\rho_{a_1+a_2},
\end{equation}
and define
$w_j=\min\{d_{j+a}-d_{a}:0\leq a\leq j+a<p^2\}$ where
$0\leq j<p^2$. Note, in particular, that
$w_0=0$ and that we have $w_j\leq d_j-d_0$ for all $j$.

\begin{lemma}\label{d-lemma} 
The associated order $\euA_{K_2/K_0}$ of $\euO_2$ has $\euO_0$-basis $\{t^{-w_j}\Psi^{(j)}\}_{0\leq j<p^2}$.
Moreover, $\euO_2$ is a free module over $\euA_{K_2/K_0}$ if and only if
$w_j=d_j-d_0$ for all $j$, and in this case $\rho_*$ is a
free generator of $\euO_2$ over $\euA_{K_2/K_0}$.
\end{lemma}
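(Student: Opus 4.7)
The plan is to exploit the explicit $\euO_0$-basis $\{\rho_a\}_{0 \leq a<p^2}$ of $\euO_2$ together with the action formula \eqref{psi-action}. First I would establish the $\euO_0$-basis of $\euA_{K_2/K_0}$: writing a general element of $K_0[G]$ as $\alpha = \sum_{j=0}^{p^2-1} c_j \Psi^{(j)}$ with $c_j \in K_0$, the condition $\alpha \euO_2 \subseteq \euO_2$ is equivalent to $\alpha \rho_a \in \euO_2$ for every $a$. Expanding $\alpha \rho_a = \sum_j c_j\, t^{d_{j+a}-d_a} \rho_{j+a}$ (with the convention $\rho_{j+a} = 0$ when $j+a \geq p^2$) and invoking linear independence of the $\rho_b$, this forces $v_0(c_j) + d_{j+a} - d_a \geq 0$ for every $a$ with $j+a<p^2$, equivalently $v_0(c_j) \geq -w_j$. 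Hence $c_j \in t^{-w_j}\euO_0$, giving the claimed basis.

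For the easy direction of the freeness claim, using the basis just obtained I would compute
\[
\euA_{K_2/K_0}\rho_* \;=\; \sum_j t^{-w_j}\euO_0\,\Psi^{(j)}\rho_0 \;=\; \bigoplus_{j=0}^{p^2-1} t^{d_j - d_0 - w_j}\euO_0\,\rho_j.
\]
Since $w_j \leq d_j - d_0$ (take $a=0$ in the definition of $w_j$), every exponent is nonnegative, so $\euA_{K_2/K_0}\rho_* \subseteq \euO_2 = \bigoplus_j \euO_0\rho_j$, with equality precisely when $w_j = d_j - d_0$ for every $j$; in that case $\rho_*$ is a free generator.

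The harder direction, and the main obstacle, is to show that if $\euO_2 = \euA_{K_2/K_0}\gamma$ for some $\gamma$, then $\rho_*$ must already generate, so the equality above is forced. Writing $\gamma = \sum_a y_a \rho_a$ with $y_a \in \euO_0$, I would pick $\alpha = \sum_j a_j t^{-w_j}\Psi^{(j)} \in \euA_{K_2/K_0}$ with $\alpha\gamma = \rho_* = \rho_0$. Only the pair $(j,a) = (0,0)$ contributes to the $\rho_0$-component of $\alpha\gamma$, forcing $a_0 y_0 = 1$ and hence $a_0 \in \euO_0^\times$. The crux is then to show $\alpha$ is a unit in $\euA_{K_2/K_0}$: each $\Psi^{(j)}$ with $j \geq 1$ lies in the augmentation ideal $I \subset K_0[G]$, which is nilpotent because $G$ is a $p$-group and $\mathrm{char}\,K_0 = p$; so $\nu := \alpha - a_0 \in \euA_{K_2/K_0}$ is nilpotent. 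Writing $\alpha = a_0(1 + a_0^{-1}\nu)$, the finite geometric series $a_0^{-1}\sum_{k \geq 0}(-a_0^{-1}\nu)^k$ exhibits $\alpha^{-1}$ as a polynomial in $\nu$ with coefficients in $\euO_0 \subseteq \euA_{K_2/K_0}$, and since $\euA_{K_2/K_0}$ is a ring, $\alpha^{-1} \in \euA_{K_2/K_0}$. Thus $\euO_2 = \euA_{K_2/K_0}\gamma = \euA_{K_2/K_0}\rho_*$, and the easy direction then forces $w_j = d_j - d_0$ for all $j$.
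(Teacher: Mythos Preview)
Your proof is correct and the first half coincides with the paper's argument. The difference lies in the converse of the freeness statement. The paper writes a free generator $\eta=\sum_r x_r\rho_r$, notes that $\{t^{-w_i}\Psi^{(i)}\eta\}$ and $\{\rho_j\}$ are both $\euO_0$-bases of $\euO_2$, and computes the change-of-basis matrix explicitly: it is upper triangular with diagonal entries $x_0\,t^{d_j-d_0-w_j}$, so its determinant $x_0^{p^2}\prod_j t^{d_j-d_0-w_j}$ must be a unit in $\euO_0$, forcing every exponent to vanish. Your route instead reduces to the case $\gamma=\rho_*$ by exhibiting a unit of $\euA_{K_2/K_0}$ carrying $\gamma$ to $\rho_*$: you find $\alpha\in\euA_{K_2/K_0}$ with $\alpha\gamma=\rho_*$, observe that its constant term $a_0$ is forced to be a unit, and then invert $\alpha$ inside $\euA_{K_2/K_0}$ using nilpotence of the augmentation ideal. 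Your argument is more structural and transfers verbatim to any local $\euO_0$-order whose Jacobson radical is the augmentation ideal; the paper's determinant computation is more hands-on but makes the obstruction to freeness, namely the nonnegative integers $d_j-d_0-w_j$, visible as the valuations of the diagonal entries.
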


\begin{proof}
Follow \cite[Theorem 2.3]{byott:scaffold}.  Since
$\{\Psi^{(j)}:0\leq j<p^2\}$ is a $K_0$-basis for $K_0[G]$, any
element $\alpha\in K_0[G]$ may be written
$\alpha=\sum_{j=0}^{p^2-1}c_j\Psi^{(j)}\mbox{ with }c_j\in K_0$.
Using (\ref{psi-action}) and the fact that $\{\rho_a\}_{0\leq a<p^2}$
is an $\euO_0$-basis for $\euO_2$, we find that $\alpha\in
\euA_{K_2/K_0}$ is equivalent to
$\alpha\rho_a=\sum_{j=0}^{p^2-1}c_j\Psi^{(j)}\rho_a\in \euO_2$ for all
$0\leq a<p^2$.  This in turn is equivalent to $c_jt^{d_{j+a}-d_a}\in
\euO_0$ or $v_0(c_j)\geq d_a-d_{j+a}$ for all $0\leq a\leq
a+j<p^2$. But this is equivalent to $-v_0(c_j)\leq w_j$ for all $0\leq
j<p^2$. The first statement is proven.

Consider the second.  Suppose that $w_j=d_j-d_0$ for all $j$. As
$\rho_*=\rho_0$, (\ref{psi-action}) yields $t^{-w_j}\Psi^{(j)}\cdot
\rho_*=\rho_a$, the basis elements $\{t^{-w_j}\Psi^{(j)}:0\leq
j<p^2\}$ take $\rho_*$ to the basis elements $\{\rho_j:0\leq j<p^2\}$
of $\euO_2$, which means that $\euO_2$ is a free
$\euA_{K_2/K_0}$-module. Conversely, suppose that $\euO_2$ is a free
$\euA_{K_2/K_0}$-module. So $\euO_2=\euA_{K_2/K_0}\eta$ for some
$\eta\in K_2$. Since $1\in \euA_{K_2/K_0}$, $\eta\in \euO_2$ and so
$\eta=\sum_{r=0}^{p^2-1}x_r\rho_r$ for some $x_r\in \euO_0$.  We have
two $\euO_0$-bases for $\euO_2$, $\{\rho_j:0\leq j<p^2\}$ and
$\{t^{-w_i}\Psi^{(i)}\eta :0\leq i<p^2\}$. Because of
(\ref{psi-action}) the matrix that takes 
the first of these to the second,
namely $M=(a_{i,j})$, is
upper triangular with
$$a_{i,j}=\begin{cases}0&i>j,\\
x_{j-i}t^{d_j-d_{j-i}-w_i} &i\leq j.
\end{cases}$$
Furthermore, it must have coefficients in $\euO_0$ and unit
determinant. Recall $x_r\in\euO_0$, so in particular $x_0\in\euO_0$.
Because the coefficients on the diagonal lie in $\euO_0$,
$x_0t^{-w_j+d_j-d_0}\in\euO_0$. Because the determinant 
$\prod_{j=0}^{p^2-1}a_{j,j}=x_0^{p^2}
\prod_{j=0}^{p^2-1}t^{d_j-d_0-w_j}$
is a unit,
we have $w_j=d_j-d_0$ for all
$0\leq j<p^2$, as required.
\end{proof}

The condition $w_j=d_j-d_0$ for all $0\leq j<p^2$ can be restated as
$d_{x + y}-d_x\geq d_y-d_0$ for all $0\leq y<p^2$ and $0\leq
x<p^2-y$. In other words, $d_{x + y}+d_0\geq d_x+d_y$ for all $0\leq
x,y$ and $0\leq x+y<p^2$.  As this is symmetric in $x,y$ we may assume
$y\leq x$. Thus we are concerned with the condition
\begin{equation}\label{cond1}
d_{x + y}+d_0\geq d_x+d_y\mbox{ for all }0\leq y\leq x\leq
x+y<p^2.
\end{equation} 
We have the $p$-adic expressions: $x=x_{(0)}+x_{(1)}p$ and
$y=y_{(0)}+y_{(1)}p$.  When we add these expressions, we get
$x+y=c_{(0)}+c_{(1)}p+\epsilon_{(1)}p^2$ where $0\leq c_{(i)}\leq
p-1$, $x_{(0)}+y_{(0)}=c_{(0)}+p\epsilon_{(0)}$,
$\epsilon_{(0)}+x_{(1)}+y_{(1)}=c_{(1)}+p\epsilon_{(1)}$ and the
$\epsilon_{(i)}\in\{0,1\}$ depend upon whether there is a carry.  Note
$\epsilon_{(1)}=0$, since $x+y<p^2$.
Recall $b_2=b_1+p^2m$.  Replace $b_2$ in
(\ref{cond1}) with $b_2=b_1+p^2m$, and get
\begin{equation}\label{cond'}
\left\lfloor\frac{(1+x'+y' )b_1+\epsilon_{(0)}D}{p^2}\right \rfloor+
\left\lfloor\frac{b_1}{p^2}\right \rfloor \geq
\left\lfloor\frac{(1+x')b_1}{p^2}\right \rfloor+
\left\lfloor\frac{(1+y')b_1}{p^2}\right \rfloor,
\end{equation}
where $x'=x_{(1)}+px_{(0)}$, $y'=y_{(1)}+py_{(0)}$ and
$D=(b_2-p^2b_1)$, all over the same range of $x,y$.
 There are two cases to consider: $\epsilon_{(0)}=0$ and 
$\epsilon_{(0)}=1$. We consider the case
$\epsilon_{(0)}=1$ first.
Using $b_2=b_1+p^2m$, observe that (\ref{cyc-spread}) means $m\geq
b_1-\lfloor b_1/p^2\rfloor$ and thus by replacing $m$ in
$b_2=b_1+p^2m$ with $ b_1-\lfloor b_1/p^2\rfloor$, we find $D\geq
b_1-p^2\lfloor b_1/p^2\rfloor$. It is enough therefore to show that
(\ref{cond'}) with $\epsilon_{(0)}=1$ holds when $D$ is replaced by
$b_1-p^2\lfloor b_1/p^2\rfloor$.  In other words, it is enough to show
that
$$\left\lfloor\frac{(2+x'+y' )b_1}{p^2}\right \rfloor \geq
\left\lfloor\frac{(1+x')b_1}{p^2}\right \rfloor+
\left\lfloor\frac{(1+y')b_1}{p^2}\right \rfloor.$$ But this follows
from the generic fact: $\lfloor (a+b)/c\rfloor \geq \lfloor
a/c\rfloor+ \lfloor b/c\rfloor$ for positive integers $a,b,c$.  The
case of (\ref{cond'}) for those $x,y$ with $\epsilon_{(0)}=0$ (so that
$x_{(i)}+y_{(i)}<p$ for both $i=0,1$) is equivalent to
\cite[(6)]{byott:scaffold}, which, because of
\cite{byott:QJM,byott:scaffold}, is equivalent to $r(b)\mid p^2-1$.

\section{Examples: $p=2$ and $3$}
In this section, we determine necessary conditions for a Galois
scaffold to exist when $p=2, 3$.  Assuming the case $p=3$ to be
representative of the general case, $p$ odd, our results
suggest that the conditions in Theorem \ref{scaffold} are sharp.

We treat $p=2$ for the sake of completeness. Note that the condition
on the residue of the ramification numbers in Theorem \ref{assoc-main}
holds vacuously.  Consequently, {\em every} fully ramified $C_2\times
C_2$-extension possesses a Galois scaffold \cite[Thm
  5.1]{elder:scaffold}, and furthermore the ring of integers is free
over its associated order in {\em every} fully ramified $C_2\times
C_2$-extension \cite[Cor 1.3]{byott:scaffold}.  This suggests that
$p=2$ is a special case. It also explains why we only consider
$C_4$-extensions here.

\subsection{Outline}

Recall that a Galois scaffold for an extension of degree $p^2$ requires two
elements $\Psi_2,\Psi_1\in K_0[G]$ satisfying
(\ref{strong-scaff}), (\ref{scaff}). Here we 
outline a general procedure which, in principle, should enable us to
obtain a necessary condition for the existence of a Galois scaffold
for arbitrary $p$. In the remainder of this section, we implement this
procedure.

Adopt the notation of \S 2.1.  So whether $G\cong C_p\times
C_p$ or $C_{p^2}$, we have $K_1=K_0(x_1)$ with $v_1(x_1)=-b_1$. Our
first step is then to identify an element $X_2\in K_2$ such that
$v_2(X_2)=-b_2$. Once this is done, we have
\begin{equation}
\alpha_{i,j}=\binom{X_2}{i}\binom{x_1}{j}, \quad 0\leq i,j<p
\label{alpha}
\end{equation}
satisfying $v_2(\alpha_{i,j})=-ib_2-jpb_1$. So
$\{v_2(\alpha_{i,j}):0\leq i,j<p\}$ is a complete set of residues
modulo $p^2$, and thus $\mathcal{B}=\{\alpha_{i,j}:0\leq i,j<p\}$ is a
basis for $K_2$ over $K_0$.  Notice that $\alpha_{p-1,p-1}$ satisfies
$v_2(\alpha_{p-1,p-1})\equiv b_2\bmod p^2$

A basis for $K_0[G]$ is given by $\{(\sigma_2-1)^i(\sigma_1-1)^j:0\leq
i,j<p\}$.  Our next step in each case is to express
$(\sigma_2-1)^i(\sigma_1-1)^j\alpha_{p-1,p-1}$ in terms of
$\mathcal{B}$. The fact for each $0\leq i<p$ both
$(\sigma_2-1)^i(\sigma_1-1)^0\alpha_{p-1,p-1}$ and
$(\sigma_2-1)^{p-1}(\sigma_1-1)^i\alpha_{p-1,p-1}$ are expressed as a
single element of $\mathcal{B}$ motivates the use of binomial
coefficients to create our basis $\mathcal{B}$ (rather than the more
naive basis $\{X_2^ix_1^j:0\leq i,j<p\}$).

At this point, we are prepared to identify elements $\Theta_j\in K_0[G]$
for $0\leq j<p$ such that
$v_2(\Theta_j\alpha_{p-1,p-1})=v_2(\alpha_{p-1,p-1})+jpb_1$. They
exist because $\alpha_{p-1,p-1}$ generates a normal basis
\cite{elder:cor-criterion}.  Because
$\{v_2((\sigma_2-1)^i\Theta_j\alpha_{p-1,p-1}):0\leq i,j<p\}$ is a
complete set of residues, $K_2= \sum_{0\leq i,j<p}K_0\cdot
(\sigma_2-1)^i\Theta_j\alpha_{p-1,p-1}$. Therefore
$\{(\sigma_2-1)^i\Theta_j:0\leq i,j<p\}$ is a basis for $K_0[G]$.  

If there is a Galois scaffold there must be $\Psi_2, \Psi_1$ in the
augmentation ideal $(\sigma-1:\sigma\in G)$ of $K_0[G]$ satisfying
\eqref{strong-scaff} and \eqref{scaff}.  Because of \eqref{scaff},
there exist $0\leq i,j<p$ such that
$v_2(\Psi_2^i\Psi_1^j\alpha_{p-1,p-1})\equiv
v_2(\alpha_{p-1,p-1})+pb_1\bmod p^2$.  Thus
$v_2(a\Psi_2^i\Psi_1^j\alpha_{p-1,p-1})= v_2(\alpha_{p-1,p-1})+pb_1$
for some $a\in K_0$.  Clearly $a\Psi_2^i\Psi_1^j\in(\sigma-1:\sigma\in
G)^{i+j}$.  Lemma 3.1 below gives $i+j=1$. Thus, without loss of generality,
we assume $i=0$ and $j=1$ and that $v_2(\Psi_1\alpha_{p-1,p-1})=
v_2(\alpha_{p-1,p-1})+pb_1$.  Note that the augmentation ideal
$(\sigma-1:\sigma\in G)$ of $K_0[G]$ is also its Jacobson radical and
unique maximal ideal.  Express $\Psi_1=\sum_{0\leq
  i,j<p}a_{i,j}(\sigma_2-1)^i\Theta_j$ for some $a_{i,j}\in K_0$ with
$a_{0,0}=0$, and proceed to impose the first requirement of a Galois
scaffold, namely (\ref{strong-scaff}). How? This depends upon $p$.

\begin{lemma}
Given $\alpha\in K_2$ with $v_2(\alpha)\equiv b_2\bmod p^2$. If
$\theta$ lies in the augmentation ideal of $K_0[G]$,
$(\sigma-1:\sigma\in G)$, and $v_2(\theta\alpha)=v_2(\alpha)+pb_1$,
then $\theta\not\in (\sigma-1:\sigma\in G)^2$.
\end{lemma}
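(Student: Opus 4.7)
The plan is to expand $\theta$ in the $K_0$-basis $\{\Psi_2^i\Psi_1^j : 0 \leq i,j < p\}$ of $K_0[G]$ afforded by the scaffold, use the mixed valuation formula $v_2(\Psi_2^i\Psi_1^j\alpha) = v_2(\alpha) + ib_2 + jpb_1$ (a consequence of \eqref{strong-scaff}--\eqref{scaff}), and show that matching $v_2(\theta\alpha)$ to $v_2(\alpha)+pb_1$ singles out $(i,j)=(0,1)$, which is incompatible with $\theta$ lying in the square $I^2$ of the augmentation ideal $I=(\sigma-1:\sigma\in G)$.

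Write $\theta = \sum_{0 \leq i,j < p} a_{i,j}\Psi_2^i\Psi_1^j$ with $a_{i,j}\in K_0$. Since $v_2(a_{i,j})\in p^2\bZ$ while the residues $ib_2+jpb_1\pmod{p^2}$ exhaust $\bZ/p^2\bZ$ (by \eqref{scaff}), the nonzero summands $a_{i,j}\Psi_2^i\Psi_1^j\alpha$ of $\theta\alpha$ have pairwise distinct valuations. Hence $v_2(\theta\alpha)$ equals the uniquely attained minimum of $v_2(a_{i,j})+v_2(\alpha)+ib_2+jpb_1$ over nonzero terms.

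Setting $v_2(\theta\alpha) = v_2(\alpha)+pb_1$ singles out a unique $(i^*,j^*)$ with $a_{i^*,j^*}\neq 0$ and $v_2(a_{i^*,j^*}) = -i^*b_2 + (1-j^*)pb_1 \in p^2\bZ$. Reducing modulo $p$ and using $p\nmid b_2$ forces $i^*=0$; the residual equation $(1-j^*)pb_1\equiv 0\pmod{p^2}$ together with $p\nmid b_1$ then forces $j^*=1$. Hence $a_{0,1}\neq 0$.

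To conclude, I would verify that $\theta\in I^2$ forces $a_{0,1}=0$ by examining the image of $\theta$ in $I/I^2$. In the cyclic case, $\Psi_2 = (\sigma_1-1)^p \in I^p\subseteq I^2$, so $\Psi_2^i\Psi_1^j \in I^{pi+j}$ already lies in $I^2$ whenever $(i,j)\neq(0,0),(0,1)$, and $\Psi_1\equiv\sigma_1-1\pmod{I^2}$ generates the one-dimensional quotient $I/I^2$. In the elementary abelian case, $[\Psi_1]\equiv(\sigma_1-1)+\mu(\sigma_2-1)$ and $[\Psi_2]=[\sigma_2-1]$ are linearly independent in the two-dimensional $I/I^2$. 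In either case the image of $\theta$ in $I/I^2$ has $a_{0,1}$ as its $[\Psi_1]$-coordinate, so $\theta\in I^2$ would force $a_{0,1}=0$, contradicting the above. The main subtlety is justifying the mixed valuation formula from the scaffold axioms alone; once that is in hand, the residue arithmetic and the structure of $I/I^2$ close the proof cleanly.
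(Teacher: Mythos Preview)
Your argument rests on the ``mixed valuation formula'' $v_2(\Psi_2^i\Psi_1^j\alpha)=v_2(\alpha)+ib_2+jpb_1$, but this is not available here. That identity is the \emph{conclusion} of Theorem~\ref{scaffold}, established only under the sufficient hypotheses of \S2; it is not a consequence of the scaffold axioms \eqref{strong-scaff}--\eqref{scaff}. Axiom~\eqref{strong-scaff} controls powers of a single $\Psi_i$ applied to elements whose valuation is $\equiv c\bmod p^2$, but says nothing about $\Psi_2$ applied to $\Psi_1^j\alpha$ (whose valuation is no longer in the prescribed residue class), nor does it identify the individual shifts with $b_2$ and $pb_1$. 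You flag this as a ``subtlety'' but do not resolve it, and in fact it cannot be resolved from the axioms alone.

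More seriously, the lemma is stated and proved \emph{without} any scaffold hypothesis, and it is invoked in \S3.1 precisely in the regime where one is trying to derive \emph{necessary} conditions for a scaffold to exist. Assuming the Theorem~\ref{scaffold} formula here is circular: if you already knew $v_2(\Psi_2^i\Psi_1^j\alpha)=v_2(\alpha)+ib_2+jpb_1$, you would read off $(i,j)=(0,1)$ directly and the lemma would be superfluous. The paper instead gives a scaffold-free argument: apply $\mathrm{Tr}_{K_2/K_1}=(\sigma_2-1)^{p-1}$ to $\theta\alpha$, use \cite[V\S3 Lemma~4]{serre:local} to compute $v_1(\mathrm{Tr}_{K_2/K_1}\theta\alpha)\equiv 2b_1\bmod p$, and then observe that if $\theta\in I^2$ (so $a_{0,0}=a_{1,0}=0$ in the $(\sigma_1-1)^i(\sigma_2-1)^j$ expansion) the only surviving terms in $\mathrm{Tr}_{K_2/K_1}\theta\alpha$ have $v_1\equiv (i+1)b_1\bmod p$ with $i\geq 2$, a contradiction. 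This trace argument is what you should supply.
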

\begin{proof}
Let $\mbox{Tr}_{K_i/K_j}=(\sigma_2-1)^{p-1}$ denote the trace from $K_i$ down to $K_j$.
Using \cite[V\S3 Lemma 4]{serre:local},
$v_1(\mbox{Tr}_{K_2/K_1}\alpha)=(v_2(\alpha)+(p-1)b_2)/p \equiv
b_2\equiv b_1\bmod p$. So
$v_1((\sigma_1-1)^i\mbox{Tr}_{K_i/K_j}\alpha)\equiv (i+1)b_1\bmod p$
for $0\leq i<p$. 
It is also the case that
$v_1(\mbox{Tr}_{K_2/K_1}\theta\alpha)=(v_2(\alpha)+pb_1+(p-1)b_2)/p
\equiv b_2+b_1\equiv 2b_1\bmod p$.
In particular, 
$v_1(\mbox{Tr}_{K_2/K_1}\theta\alpha)<\infty$.
Let $\theta=\sum_{0\leq i,j<p}a_{i,j}(\sigma_1-1)^i(\sigma_2-1)^j$ with 
$a_{i,j}\in K_0$.  Since $\theta$ lies in the 
augmentation ideal of $K_0[G]$, $a_{00}=0$.
If $\theta\in
(\sigma-1:\sigma\in G)^2$, then  $a_{10}=0$ as well.
As a result, $\mbox{Tr}_{K_2/K_1}\theta\alpha=
\sum_{i=2}^{p-1}a_{i,0}(\sigma_1-1)^i(\sigma_2-1)^{p-1}\alpha=\sum_{i=2}^{p-1}a_{i,0}(\sigma_1-1)^i\mbox{Tr}_{K_2/K_1}\alpha$.
If $p=2$, the contradiction arises 
because we can not have both $v_1(\mbox{Tr}_{K_2/K_1}\theta\alpha)<\infty$ and
$\mbox{Tr}_{K_2/K_1}\theta\alpha=0$. If $p>2$, the contraction arises because 
for $2\leq i<p$, $2b_1\not\equiv (i+1)b_1\bmod p$.
\end{proof}

\subsection{$C_4$-extensions}

There are two conditions stated in Theorem \ref{scaffold}. They are sufficient
for a Galois scaffold. For $p=2$, one of these conditions holds
vacuously, which leaves $b_2>4b_1$, namely (\ref{cyc-spread}), as the
only interesting condition. Here we show that 
$b_2\geq
4b_1-1$ is both necessary and sufficient for a Galois scaffold to
exist in a fully ramified $C_4$-extension.
Assume notation of \S2.1.2.  So $v_0(\beta_1)=-b_1<0$ odd, and
$K_2/K_0$ satisfies $K_2=K_0(x_2)$ with
$\wp(x_2)=\beta_1x_1+\mu^2\beta_1+\epsilon$ where $\mu\in K_0$, and
because $p=2$, $\epsilon\in \kappa$.  Recall
$(\sigma_1-1)x_2=x_1$ with $\wp(x_1)=\beta_1$.  Let $m=-v_0(\mu)$ and
$X_2=x_2-\mu x_1\in K_2$. Then
$\wp(X_2)=(\beta_1+\wp(\mu))x_1+\epsilon$ where
$v_1((\beta_1+\wp(\mu))x_1)=-\max\{3b_1,b_1+4m\}=-b_2$.  Thus
$v_2(X_2)=-b_2$. The basis $\mathcal{B}$ is $\{\alpha_{i,j}:0\leq
i,j\leq 1\} =\{1, x_1, X_2, X_2x_1\}$.

Note that $(\sigma_1-1)X_2=x_1-\mu$. So $\sigma_1X_2x_1=(X_2+x_1-\mu)(x_1+1)$
and thus $(\sigma_1-1)X_2x_1=X_2+\mu x_1+\beta_1+\mu$. Therefore
for $0\leq i,j\leq 1$ we have
$(\sigma_2-1)^i(\sigma_1-1)^j\alpha_{1,1}=\alpha_{1-i,1-j}+\epsilon_{i,j}$,
where the error term $\epsilon_{i,j}$ is zero for $(i,j)\in
\{(0,0),(1,0),(1,1)\}$, and
$\epsilon_{0,1}=\mu\alpha_{0,1}+(\beta_1+\mu)\alpha_{0,0}$.
Use this to find
$\Theta_1=(\sigma_1\sigma_2^{[\mu]}-1)+\beta_1(\sigma_1-1)(\sigma_2-1)$,
so that 
the effect of $\Theta_1$, $(\sigma_2-1)$, $(\sigma_2-1)\Theta_1$
on $\alpha_{1,1}=X_2x_1$, $\alpha_{1,0}=X_2$, $\alpha_{0,1}=x_1$ is as follows:
\begin{equation}\label{table}
\begin{array}{r|ccc}
&X_2x_1&X_2&x_1\\ \hline
\Theta_1& X_2 &x_1&1\\
(\sigma_2-1)&x_1 &1&0\\
(\sigma_2-1)\Theta_1& 1&0&0
\end{array}\end{equation}

Now $\alpha_{1,1}$ satisfies the valuation criterion for a normal
basis generator, namely $v_2(\alpha_{1,1})=b_2 \mod 4$. Thus, if there is a
Galois scaffold, then there is a $\Psi_1$ in the augmentation ideal of
$K_0[G]$, which is expressible as
$\Psi_1=a_{0,1}\Theta_1+a_{1,0}(\sigma_2-1)+a_{1,1}(\sigma_2-1)\Theta_1$
with $a_{i,j}\in K_0$, such that $v_2(\Psi_1 \rho )=v_2(\rho)+2b_1$
for all $\rho\in K_2$ with $v_2(\rho)\equiv v_2(\alpha_{1,1})\bmod 4$.
Since $v_2(\Theta_1 \alpha_{1,1})=v_2(\alpha_{1,1})+2b_1$,
$v_2(a_{0,1})=0$ and $v_2(a_{i,j})+ib_2+2jb_1>2b_1$ for $(i,j)\neq
(0,1)$.  Multiplying $\rho$ by an element of $K_0$ if necessary, we
may assume, without loss of generality, that $\rho=X_2x_1
+aX_2+bx_1+c$ with $a,b,c\in K_0$.  So that
$v_2(\rho)=v_2(\alpha_{1,1})$, we require $v_2(a)>-2b_1$,
$v_2(b)>-b_2$ and $v_2(c)>-b_2-2b_1$.  Note that
$\Psi_1\rho=a_{0,1}X_2+(a_{0,1}a+a_{1,0})x_1
+(a_{0,1}b+a_{1,0}a+a_{1,1})$.  Using the bounds on $v_2(a)$,
$v_2(b)$, $v_2(c)$ and the $v_2(a_{i,j})$, we find $\Psi_1\rho\equiv
a_{0,1}(X_2+ax_1) \bmod X_2\euP_2$, which means that for a Galois
scaffold we require $v_2(ax_1)>v_2(X_2)$ for all $a\in K_0$ with
$v_0(a)\geq \lceil -2b_1/4\rceil$.  Thus $\lfloor
b_1/2\rfloor\leq\lfloor(b_2-2b_1)/4\rfloor$, which since $b_1$ is odd
is equivalent to $b_2\geq 4b_1-1$.  On the other hand, if $b_2\geq
4b_1-1$ a Galois scaffold exists. This follows from the observation
that for $\rho^*\in K_2$ with $v_2(\rho^*)$ odd, we have
$v_2((\sigma_2-1)\rho^*)=v_2(\rho^*)+b_2$.

\subsection{$C_9$-extensions}
We prove that for $C_9$-extensions the conditions in Theorem
\ref{scaffold} are sharp.
Assume $p=3$ in \S2.1.2.  So
$v_0(\beta_1)=-b_1<0$ with $p\nmid b_1$. Either $v_0(\beta_2)<0$ with
$p\nmid v_0(\beta_2)$ or $\beta_2\in \kappa$. 
In any case, there are $\mu_1$, $\mu_2 \in K_0$  (either or 
both of which may be $0$) 
and $k\in \kappa$
such that 
$\beta_2=\mu_1^3\beta_1+\mu_2^3\binom{\beta_1}{2}+k$.
Let 
$m_i=-v_0(\mu_i)$ for $i=1$, $2$. If $\beta_2 \neq 0$,
$v_0(\beta_2)=-\max\{3m_1+b_1,3m_2+2b_1,0\}$.  Our $C_9$-extension
$K_2/K_0$ satisfies $K_2=K_0(x_2)$ with
$\wp(x_2)=-\beta_1x_1^2-\beta_1^2x_1+\mu_1^3\beta_1+\mu_2^3\binom{\beta_1}{2}+k$,
and $(\sigma_1-1)x_2=-x_1^2-x_1$ with $\wp(x_1)=\beta_1$.  Let
$X_2=x_2-\mu_1 x_1-\mu_2\binom{x_1}{2}$. Then $\wp(X_2)
=-\beta_1x_1^2-\beta_1^2x_1-\wp(\mu_1)x_1-\wp(\mu_2)\binom{x_1}{2}-\mu_2^3\beta_1x_1+k$. Notice
that $v_1(\wp(X_2))=-b_2$ and
$(\sigma_1-1)X_2=-x_1^2-x_1-\mu_1-\mu_2x_1$.  Thus $v_2(X_2)=-b_2$. We
have our basis $\mathcal{B}=\left \{\alpha_{i,j}:0\leq i,j\leq 2\right
\}$ using (\ref{alpha}).

Verify, using a software package like Maple, that
for $0\leq i,j\leq 2$ we have
$(\sigma_2-1)^i(\sigma_1-1)^j\alpha_{2,2}=\alpha_{2-i,2-j}+\epsilon_{i,j}$,
where the error term $\epsilon_{i,j}$ is zero for $(i,j)\in
\{(0,0),(1,0),(2,0).(2,1),(2,2)\}$. Otherwise
\begin{eqnarray*}
\epsilon_{0,1}&=&(1-\mu_1-\mu_2)(\alpha_{1,2}+\alpha_{1,1})+\beta_1\alpha_{1,1}+(\mu_2-1)\beta_1\alpha_{1,0}
\\ & &
+(\mu_1\mu_2+\mu_1-\mu_1^2+\mu_2-\mu_2^2)(\alpha_{0,2}+\alpha_{0,1})+\mu_2\beta_1\alpha_{0,2}\\& & +(\mu_2^2-\mu_1)\beta_1\alpha_{0,1}+
((\mu_1-\mu_2-\mu_1\mu_2+\mu_2^2)\beta_1 +\beta_1^2)\alpha_{0,0}\\ \epsilon_{1,1}
&=&(1-\mu_1-\mu_2)(\alpha_{0,2}+\alpha_{0,1})+\beta_1\alpha_{0,1}+(\mu_2-1)\beta_1\alpha_{0,0}
\end{eqnarray*}
\begin{eqnarray*}
\epsilon_{0,2}&=& (\mu_2-1)(\alpha_{1,2}-\alpha_{1,0})+\mu_1(\alpha_{1,1}+\alpha_{1,0}) \\
&&+(\mu_2^2-\mu_2+\mu_1^2-(1+\mu_2)\beta_1)\alpha_{0,2}\\
& & 
+(-\mu_1\mu_2-\mu_1+(1+\mu_2-\mu_2^2+\mu_1)\beta_1)\alpha_{0,1}\\
& & +(\mu_2-\mu_2^2-\mu_1-\mu_1^2-\mu_1\mu_2+(\mu_1\mu_2-\mu_1-\mu_2-\mu_2^2-1)\beta_1-\beta_1^2)\alpha_{0,0}
\\
\epsilon_{1,2}&=& (\mu_2-1)(\alpha_{0,2}-\alpha_{0,0})+\mu_1(\alpha_{0,1}+\alpha_{0,0})
 \end{eqnarray*}
Now observe that because 
$b_2= \max\{7b_1,b_1+9m_1,4b_1+9m_2\}$, we have $v_2(\mu_1)\geq
b_1-b_2$ and $v_2(\mu_2)\geq 4b_1-b_2$. So
using the expressions for $(\sigma_2-1)^i(\sigma_1-1)^j\alpha_{2,2}$
and
Gaussian Elimination,
we find that
\begin{eqnarray*}
\Theta_1&=&
(\sigma_1\sigma_2^{[\mu_1]}-1)-\beta_1(\sigma_2-1)(\sigma_1-1)
-\mu_2\beta_1(\sigma_2-1)(\sigma_1-1)^2\\
&&+(\mu_2^2-\mu_1)\beta_1(\sigma_2-1)^2
+\left [(\mu_1\mu_2-\mu_1-\mu_2^2)\beta_1+\beta_1^2\right ](\sigma_2-1)^2(\sigma_1-1)\\
&&+\left [\mu_1\mu_2\beta_1+(1+\mu_2)\beta_1^2\right ](\sigma_2-1)^2(\sigma_1-1)^2,\\
\Theta_2&=&(\sigma_1\sigma_2^{[\mu_1]}-1)^2
-\mu_2(\sigma_2-1)
+(1+\mu_2)\beta_1(\sigma_2-1)^2\\
&&+(\mu_2^2-\mu_2)\beta_1(\sigma_2-1)^2(\sigma_1-1)
+[(\mu_2+\mu_2^2)\beta_1+\beta_1^2](\sigma_2-1)^2(\sigma_1-1)^2,
\end{eqnarray*}
give $\Theta_j\alpha_{2,2}\equiv\alpha_{2,2-j}\bmod \alpha_{2,2-j}\euP_2$. Let $\Theta_0=1$.  

If there is a Galois scaffold then there is a $\Psi_1$ in the augmentation ideal of $K_0[G]$,
which is expressible as $\Psi_1=\sum_{0\leq i,j\leq
  2}a_{i,j}(\sigma_2-1)^i\Theta_j$ for some $a_{i,j}\in K_0$, such
that $v_2(\Psi_1 \alpha_{2,2} )=v_2(\alpha_{2,2})+3b_1$ and
$v_2(\Psi_1^2\alpha_{2,2})=v_2(\alpha_{2,2})+6b_1$.  
Note that $a_{0,0}=0$, since
$\Psi_1$ is in the augmentation ideal. Since
$v_2(\Psi_1 \alpha_{2,2} )=v_2(\alpha_{2,2})+3b_1$, we have $v_2(a_{0,1})=0$
and $v_2(a_{i,j})+ib_2+3jb_1>3b_1$ for $(i,j)\neq (0,1)$.

To determine $v_2(\Psi_1^2\alpha_{2,2})$, we expand $\Psi_1^2$ in
terms of the $K_0$-basis $\{(\sigma_2-1)^i\Theta_j:0\leq i,j\leq 2\}$
for $K_0[G]$. This requires the following identities, which can
be verified with a software package like Maple (establish
polynomial identities where $x=\sigma_1-1$, $x^3=\sigma_2-1$,
and $x^9=0$):
\begin{eqnarray*}
\Theta_1^2&=&\Theta_2+ \beta_1(\sigma_2-1)\Theta_2
-(\mu_2\beta_1+\beta_1^2)(\sigma_2-1)^2\Theta_2
\\
& & +\beta_1(\mu_2^2+\mu_2)(\sigma_2-1)^2\Theta_1+(\mu_2-1)\beta_1(\sigma_2-1)^2+\mu_2(\sigma_2-1),\\
\Theta_1\Theta_2&=&(\sigma_2-1)-\mu_2(\sigma_2-1)\Theta_1+\beta_1(\sigma_2-1)^2\Theta_1-\beta_1(\mu_2+\mu_2^2)(\sigma_2-1)^2\Theta_2\\
& & -\beta_1(\sigma_2-1)^2,\\
\Theta_2^2&=&(\sigma_2-1)\Theta_1+\beta_1(\sigma_2-1)^2\Theta_1
+\mu_2(\sigma_2-1)\Theta_2
-\beta_1(\sigma_2-1)^2\Theta_2\\
&&-\mu_2^2(\sigma_2-1)^2.
\end{eqnarray*}

In the expansion of $\Psi_1^2$ in terms of
$\{(\sigma_2-1)^i\Theta_j:0\leq i,j\leq 2\}$, we find the coefficient
of $(\sigma_2-1)$ to be $2a_{0,1}a_{0,2}+
a_{0,1}^2\mu_2$, while the coefficient of $(\sigma_2-1)\Theta_2$ is
$2a_{0,1}a_{1,1}+2a_{1,0}a_{0,2}
+a_{0,2}^2\mu_2+a_{0,1}^2\beta_1$.  When we apply $\Psi_1^2$ to
$\alpha_{2,2}$, it must be that both
$v_2((2a_{0,1}a_{0,2}+a_{0,1}^2\mu_2)(\sigma_2-1)\alpha_{2,2})>v_2(\Theta_2\alpha_{2,2})=v_2(\alpha_{2,2})+6b_1$
and
$v_2((2a_{0,1}a_{1,1}+2a_{1,0}a_{0,2}+a_{0,2}^2\mu_2+a_{0,1}^2\beta_1)(\sigma_2-1)\Theta_2\alpha_{2,2})>v_2(\alpha_{2,2})+6b_1$.
  We may discard those terms of valuation greater than
  $v_2(\alpha_{2,2})+6b_1$, using $v_2(a_{0,1})=0$ and
  $v_2(a_{i,j})+ib_2+3jb_1>3b_1$ for $(i,j)\neq (0,1)$.  This means
  that we can drop $-a_{0,1}a_{1,1}-a_{1,0}a_{0,2}$ from the
  coefficient for $(\sigma_2-1)\Theta_2\alpha_{2,2}$, leaving
  $(a_{0,2}^2\mu_2+a_{0,1}^2\beta_1)
  (\sigma_2-1)\Theta_2\alpha_{2,2}$.  If $v_2(\mu_2)<6b_1-b_2$, then
  because
  $v_2(a_{0,1}^2\mu_2(\sigma_2-1)\alpha_{2,2})<v_2(\alpha_{2,2})+6b_1$
  we must have $v_0(\mu_2)=v_0(a_{0,2})$.  If $b_2<9b_1$, then because
  $v_2(a_{0,1}^2\beta_1(\sigma_2-1)\Theta_2\alpha_{2,2})<v_2(\alpha_{2,2})+6b_1$
  we must have $2v_0(a_{0,2})+v_0(\mu_2)=v_0(\beta_1)$. So if
  $v_2(\mu_2)<6b_1-b_2$ and $b_2<9b_1$, then we must have
  $3v_0(\mu_2)=v_0(\beta_1)=-b_1$. But $3\nmid b_1$.  This means that
  we have $v_2(\mu_2)>6b_1-b_2$ or $b_2>9b_1$, and there are two cases
  to consider.  Suppose that $v_2(\mu_2)>6b_1-b_2$. Then because we
  must have $v_2(
  (a_{0,2}^2\mu_2+a_{0,1}^2\beta_1)(\sigma_2-1)\Theta_2\alpha_{2,2}
  )>v_2(\alpha_{2,2})+6b_1$, we must have $v_2(
  (a_{0,1}^2\beta_1)(\sigma_2-1)\Theta_2\alpha_{2,2}
  )>v_2(\alpha_{2,2})+6b_1$, or $b_2>9b_1$.  Suppose that $b_2>9b_1$.
  Then because we must have
  $v_2((a_{0,1}^2\mu_2-a_{0,1}a_{0,2})(\sigma_2-1)\alpha_{2,2})>v_2(\alpha_{2,2})+6b_1$,
  we must have
  $v_2(a_{0,1}^2\mu_2(\sigma_2-1)\alpha_{2,2})>v_2(\alpha_{2,2})+6b_1$,
  or $v_2(\mu_2)>6b_1-b_2$.  As a result, we have shown that in order
  for a Galois scaffold to exist, both $b_2>9b_1$ and
  $v_2(\mu_2)>6b_1-b_2$ must hold.  The first condition agrees with
  (\ref{cyc-spread}). The second condition agrees with
  (\ref{cyc-shape}).

\subsection{$C_3\times C_3$-extensions}
We prove that for $C_3\times C_3$-extensions the conditions in Theorem
\ref{scaffold} are sharp.
Assume $p=3$ in \S2.1.1.  So $v_0(\beta_2)\leq v_0(\beta_1)=-b_1<0$
with $p\nmid b_1, v_0(\beta_2)$.  We follow \S3.3 closely, except that
the technical issues here are easier, since 
expressions here are often truncations of the expressions in
\S3.3: Again, there are elements $\mu_1,\mu_2\in K_0$ with
$v_0(\mu_i)=-m_i$ and a $k\in \kappa$ such that
$\beta_2=\mu_1^3\beta_1+\mu_2^3\binom{\beta_1}{2}+k$. Since
$v_0(\beta_2)<0$, $v_0(\beta_2)=-\max\{3m_1+b_1,3m_2+2b_1\}$.  Let
$\wp(x_i)=\beta_i$, and let $(\sigma_i-1)x_j=\delta_{i,j}$ be the
Kronecker delta function.  Let $X_2=x_2-\mu_1
x_1-\mu_2\binom{x_1}{2}$. Then $\wp(X_2)
=-\wp(\mu_1)x_1-\wp(\mu_2)\binom{x_1}{2}-\mu_2^3\beta_1x_1+k$. So
$v_1(\wp(X_2))=-b_2$ and $(\sigma_1-1)X_2=-\mu_1-\mu_2x_1$.  A basis
for $K_2/K_0$ is given by $\mathcal{B}=\left \{\alpha_{i,j}:0\leq
i,j\leq 2\right \}$ where $\alpha_{i,j}=\binom{X_2}{i}\binom{x_1}{j}$.

Verify, using a software package as in \S3.3, that
for $0\leq i,j\leq 2$ we have
$(\sigma_2-1)^i(\sigma_1-1)^j\alpha_{2,2}=\alpha_{2-i,2-j}+\epsilon_{i,j}$,
where the error term $\epsilon_{i,j}$ is zero for $(i,j)\in
\{(0,0),(1,0),(2,0).(2,1),(2,2)\}$. Otherwise
\begin{eqnarray*}
\epsilon_{0,1} &=&
-(\mu_1+\mu_2)(\alpha_{1,2}+\alpha_{1,1})+\mu_2\beta_1\alpha_{1,0}\\
&&+(\mu_1\mu_2-\mu_1-\mu_1^2-\mu_2-\mu_2^2)(\alpha_{0,2}
+\alpha_{0,1})+
\mu_2^2\beta_1\alpha_{0,1} \\
&&+(\mu_2-\mu_1\mu_2+\mu_2^2)\beta_1\alpha_{0,0},\\
\epsilon_{1,1} &=&
-(\mu_1+\mu_2)(\alpha_{0,2}+\alpha_{0,1})+\mu_2\beta_1\alpha_{0,0},\\
\epsilon_{0,2} &=&
\mu_2(\alpha_{1,2}-\alpha_{1,0})+\mu_1(\alpha_{1,1}+\alpha_{1,0})
+(\mu_2^2+\mu_2+\mu_1^2)\alpha_{0,2} \\
&&+(\mu_1-\mu_1\mu_2
-\mu_2^2\beta_1)\alpha_{0,1}\\
&&+(\mu_1-\mu_1^2-\mu_2-\mu_2^2-\mu_1\mu_2
+(\mu_1\mu_2
-\mu_2^2)\beta_1)\alpha_{0,0},\\
\epsilon_{1,2}
&=&\mu_2(\alpha_{0,2}-\alpha_{0,0})+\mu_1(\alpha_{0,1}+\alpha_{0,0}).\\
\end{eqnarray*}
Use this and the fact that because
$b_2= \max\{b_1+9m_1,4b_1+9m_2\}$, we have $v_2(\mu_1)\geq
b_1-b_2$ and $v_2(\mu_2)\geq 4b_1-b_2$ to find
that 
$\Theta_j\alpha_{2,2}\equiv\alpha_{2,2-j}\bmod \alpha_{2,2-j}\euP_2$ for
\begin{multline*}
\Theta_1=(\sigma_1\sigma_2^{[\mu_1]}-1) +\mu_2^2\beta_1(\sigma_2-1)^2
-\mu_2\beta_1(\sigma_2-1)(\sigma_1-1)^2\\+(\mu_1\mu_2-\mu_2^2)\beta_1(\sigma_2-1)^2(\sigma_1-1)
+(\mu_1\mu_2-\mu_2)\beta_1(\sigma_2-1)^2(\sigma_1-1)^2,
\end{multline*}
\begin{multline*}
\Theta_2=
(\sigma_1\sigma_2^{[\mu_1]}-1)^2 -\mu_2(\sigma_2-1)
-\mu_2(\sigma_2-1)^2
+\mu_2^2\beta_1(\sigma_2-1)^2(\sigma_1-1)\\
+\mu_2^2\beta_1(\sigma_2-1)^2(\sigma_1-1)^2.
\end{multline*}
Let $\Theta_0=1$.
Using a software package as in \S3.3, we establish:
\begin{eqnarray*}
\Theta_1^2&=&\Theta_2 +\mu_2(\sigma_2-1)+\mu_2(\sigma_2-1)^2+\mu_2^2\beta_1(\sigma_2-1)^2\Theta_1,\\
\Theta_1\Theta_2&=&-\mu_2(\sigma_2-1)\Theta_1-\mu_2(\sigma_2-1)^2\Theta_1-\mu_2^2\beta_1(\sigma_2-1)^2\Theta_2\\
\Theta_2^2&=&
-\mu_2^2(\sigma_2-1)^2
+\mu_2(\sigma_2-1)\Theta_2
+\mu_2(\sigma_2-1)^2\Theta_2
\end{eqnarray*}

If there is a Galois scaffold then there is a $\Psi_1=\sum_{0\leq
  i,j\leq 2}a_{i,j}(\sigma_2-1)^i\Theta_j$ in the augmentation ideal of $K_0[G]$
with
$a_{i,j}\in K_0$ and $a_{0,0}=0$,
such that $v_2(\Psi_1\alpha_{2,2})=v_2(\alpha_{2,2})+3b_1$ and thus
$v_2(a_{0,1})=0$ and for $(i,j)\neq (0,1)$,
$v_2(a_{i,j})+ib_2+3jb_1>3b_1$. 
Furthermore
$v_2(\Psi_1^2\alpha_{2,2})=v_2(\alpha_{2,2})+6b_1$.  Expand
$\Psi_1^2$ in terms of $\{(\sigma_2-1)^i\Theta_j:0\leq i,j\leq
2\}$. The coefficient of $(\sigma_2-1)$ is
$a_{0,1}^2\mu_2$.  When we apply $\Psi_1^2$ to
$\alpha_{2,2}$, we must have
$v_2(a_{0,1}^2\mu_2(\sigma_2-1)\alpha_{2,2})>v_2(\Theta_2\alpha_{2,2})=v_2(\alpha_{2,2})+6b_1$.
This implies
$v_2(\mu_2)>6b_1-b_2$ and thus (\ref{cyc-shape}).

\bibliography{bib}
\end{document}